\newtheorem{theorem}{Theorem}
\newtheorem{lemma}{Lemma}
\DeclareMathOperator{\M}{\mathcal{M}}
\DeclareMathOperator{\A}{\mathcal{A}}
\DeclareMathOperator{\Z}{\mathbb{Z}}
\begin{document}
	\title[The second derivative of  the maximal function]
	{The second derivative of the discrete Hardy-Littlewood maximal function}
	
	\author{Faruk Temur}
	\address{Department of Mathematics\\
		Izmir Institute of Technology, Urla, İZMİR, 35430,
		TÜRKİYE}
	\email{faruktemur@iyte.edu.tr}
	\keywords{Boundedness of variation, discrete maximal function, second derivative}
	\subjclass[2020]{Primary: 42B25; Secondary: 46E35}
	\date{April 25, 2025}

	\begin{abstract}
		The regularity of the Hardy-Littlewood maximal function, in  both discrete and continuous contexts, and for both centered and noncentered variants, has been subjected to intense study for the last two decades. But efforts so far have concentrated on   first order	differentiability and variation, as it is known that in the continuous  context higher order regularity is impossible. This short note gives the first positive result on the  higher order regularity of the  discrete noncentered maximal function. 
	\end{abstract}

	\maketitle

	\section{Introduction}\label{intro}
	
	Let $\Z^+$ denote the nonnegative integers. For a function   $f:\mathbb{Z} \rightarrow \mathbb{R}$ the discrete noncentered  averages $\A_{r,s}f(n), \ r,s\in \Z^+$ are 
	\begin{equation*}
		\A_{r,s}f(n):=\frac{1}{r+s+1}\sum_{j=-r}^{s}|f(n+j)|. 
	\end{equation*}
	Then the discrete noncentered Hardy-Littlewood  maximal function is 
	\begin{equation*}
		\mathcal{M}f(n):=\sup_{r,s\in \Z^+} \A_{r,s}f(n).
	\end{equation*} 
	It is known since the early 20th   century that this  is a bounded  operator on $l^p(\mathbb{Z}), \ 1<p\leq \infty$, and also satisfies a weak type $(1,1)$ bound. More recently  the regularity of such operators  attracted  the  interest of analysts, and a vast literature developed on this issue.  Here we will recall  only the most relevant results from  this literature,  for a broad view  the  reader may consult the survey \cite{ct}.   In discrete context regularity is studied using discrete derivatives defined by 
	\begin{equation*}
		\begin{aligned}
			f'(n)&:=f(n+1)-f(n), \\
			f''(n)&:=f(n+2)-2f(n+1)+f(n),\\
			f'''(n)&:=f(n+3)-3f(n+2)+3f(n+1)-f(n),
		\end{aligned}
	\end{equation*}
	and so on.   By the work \cite{bchp} we know that
	\[ \|(\M f)'\|_1 \leq \|f'\|_1.\] 
	Our aim in this work is to prove  the analogous result for the second derivatives when $f$ is a characteristic  function:
	
	\begin{theorem}\label{theorem char}
		Let $A\subseteq \Z$, and $1\leq p\leq \infty$. Then for the characteristic function $\chi_A$ of this set 
	\[    \|(\M \chi_A)''\|_p \leq 2^{1-\frac{1}{p}}3^{\frac{1}{p}}\|\chi_A''\|_p.    \]
	\end{theorem}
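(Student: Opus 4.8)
The plan is to reduce the problem to the local structure of $\M\chi_A$ on the complement of $A$ and to exploit that, for a characteristic function, the maximal function is a supremum of \emph{densities}. I would first argue that one may assume $A$ is finite: if $\|\chi_A''\|_1=\infty$ the claim is vacuous, and the remaining infinite configurations (half-lines, cofinite sets, finite unions of intervals with an unbounded piece) force $\M\chi_A$ to be constant on the relevant tails, so $(\M\chi_A)''=0$ there and they are trivial. For finite $A$ the complement $\mathbb{Z}\setminus A$ splits into finitely many bounded gaps together with two unbounded tails, $\M\chi_A(n)=1$ precisely for $n\in A$, and $\M\chi_A(n)<1$ off $A$ with the defining supremum attained.

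The heart of the argument is a convexity statement. Writing $g:=\M\chi_A$, I claim the second difference $g''$ is nonnegative at every $n$ whose window $\{n,n+1,n+2\}$ lies inside a single gap or a single tail. To prove this, fix $n$ in a gap and write $g(n)=\max\{\,|A\cap[P,Q]|/(Q-P+1):P\le n\le Q\,\}$. Since $n\notin A$, an optimal interval may be taken with both endpoints in $A\cup\{n\}$, because any empty endpoint lying strictly inside (and not equal to $n$) can be deleted, strictly raising the density. This splits the competitors into three families: those reaching $A$ only to the left of $n$ (value $m/(n-P+1)$), only to the right (value $m/(Q-n+1)$), and those spanning the whole gap (a constant). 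As functions of $n$ each of these is convex, so their pointwise maximum $g$ is convex on the gap; the tails are handled identically. This is the step I expect to be the main obstacle, since it requires careful bookkeeping of which blocks an optimal window can reach, as well as the degenerate cases of length-one gaps.

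Granting convexity, $g''$ can be negative only at the (at most two) corners abutting each maximal block of $A$, where $g$ descends from the plateau value $1$ into a convex gap or tail. Each such corner is cheap: if a block has length $\ell$, then the length-$(\ell+1)$ window consisting of the block together with one neighbouring point already has density $\ell/(\ell+1)\ge \tfrac12$, so $g$ is at least $\ell/(\ell+1)$ immediately outside the block. Hence each corner contributes at most $1-\ell/(\ell+1)=1/(\ell+1)$ (two corners for a block of length $\ell\ge 2$, a single corner of size at most $1$ for a singleton), and in every case at most $2$ per block. Summing, $\sum_n (g''(n))^-\le 2\,\#\{\text{blocks}\}$.

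Finally I would convert this into the stated bound. Since $g\to 0$ at $\pm\infty$, telescoping gives $\sum_n g''(n)=0$, whence $\|g''\|_1=\sum_n (g''(n))^+ + \sum_n (g''(n))^- = 2\sum_n (g''(n))^-\le 4\,\#\{\text{blocks}\}$. On the other side, $\chi_A'$ is a sequence of alternating $+1$'s and $-1$'s, one rising and one falling edge per block, and a direct total-variation computation yields $\|\chi_A''\|_1=4\,\#\{\text{blocks}\}$. Combining the two gives $\|(\M\chi_A)''\|_1\le \|\chi_A''\|_1\le 3\|\chi_A''\|_1$, which is in fact stronger than required; the factor $3$ leaves ample room to absorb the degenerate short-gap and tail cases without optimizing the corner estimates.
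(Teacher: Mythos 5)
Your route is genuinely different from the paper's. The paper decomposes $\Z$ into maximal chains of convex and concave points of $\M\chi_A$, reduces $\|(\M\chi_A)''\|_1$ to a sum over the concave boundary plus two limit terms, and then invokes a single lemma (every concave point of $\M\chi_A$ lies in $A$) to compare that sum with $\|\chi_A''\|_1$; you instead prove convexity of $\M\chi_A$ inside each gap by exhibiting it as a maximum of convex competitor families, estimate the concave corners quantitatively via the density $\ell/(\ell+1)$, and close with the telescoping identity $\sum_n g''(n)=0$. If completed, your argument yields the constant $1$ rather than $3$, so the extra bookkeeping buys a genuinely stronger statement; the reduction to finite $A$, the corner bound $\sum_n(g''(n))^-\le\sum_{\text{blocks}}2/(\ell+1)\le\#\{\text{blocks}\}$, the identity $\|\chi_A''\|_1=4\,\#\{\text{blocks}\}$, and the telescoping step all check out.

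There is, however, one genuine gap in the step ``granting convexity, $g''$ can be negative only at the corners abutting each block.'' Your convexity claim covers only centers $m$ with all of $m-1,m,m+1$ inside a single gap or tail, and your corner estimate covers centers $m\in A$ that are block endpoints. This leaves unexamined the centers $m\notin A$ adjacent to a block, i.e.\ with $m-1\in A$ or $m+1\in A$: there the three-point window leaves the gap, and convexity of $g$ restricted to the gap says nothing about the sign of $g(m-1)+g(m+1)-2g(m)$. These positions do satisfy $g''\ge 0$, but that requires an argument; for instance, extend your convex minorants $h_P(n)=|A\cap[P,a]|/(n-P+1)$, $h^Q(n)=|A\cap[b,Q]|/(Q-n+1)$ and the spanning constants to the closed gap $[a,b]$, let $H$ be their maximum, and note that $H$ is convex there, agrees with $g$ on the open gap, and satisfies $H(a)\le 1=g(a)$ and $H(b)\le 1=g(b)$, whence $g(m-1)+g(m+1)-2g(m)\ge H(m-1)+H(m+1)-2H(m)\ge 0$. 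This missing case is precisely the content of Lemma~\ref{Lemma char}, which the paper proves by comparing the optimal window at $m$ with its one-step enlargements toward $m-1$ and $m+1$. Once you add it, your proof closes.
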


	The next section  lays the groundwork for the proof, which is presented in   the last section.
	
	\section{Preliminaries}
	
	Let  $f:\mathbb{Z} \rightarrow \mathbb{R}$. 
	By a change of variables 
	\begin{equation}\label{cov}
		\|f''\|_1=\sum_{n\in\Z}|f(n+1)+f(n-1)-2f(n)|.
	\end{equation}
	 We define the  sets of convex and concave points
	\begin{equation*}
		S_+f:=\big\{   n\in \Z: f(n+1)+f(n-1)\geq 2f(n)           \big\},  \qquad    S_-f:=\Z \setminus S_+f,
	\end{equation*}
	and the left and right boundary of concave points
	\begin{equation*}
		\begin{aligned}
			\partial_l S_-f:=\big\{   n\in S_-f: n-1   \in   S_+f      \big\}, \quad  \partial_rS_-f:=\big\{   n\in S_-f: n+1   \in   S_+f      \big\}.  
		\end{aligned}
	\end{equation*}
	The boundary  is then defined as $\partial S_-f:=	\partial_l S_-f\cup 	\partial_r S_-f$. 
	We observe that one of $S_+f,S_-f$ is empty if and only if  $\partial S_-f$ is empty. On the other hand suppose $S_+f,S_-f$ are both nonempty. Then  $\partial_l S_-f$ is empty if and only if $S_-f$  have a finite supremum, and consists of   all integers not exceeding  this supremum. Similarly  $\partial_r S_-f$ is empty if and only if  $S_-f$ have a finite infimum, and  consists   of all integers not less than this infimum.
	We define chains in $S_+f$ as sequences of consecutive elements of $S_+f$ that are of maximal length. Chains in $S_-f$ are defined analogously. For a chain  $n,n+1,\ldots, n+k$  in $S_+f$  
	\[\sum_{j=n}^{n+k}|f(j+1)+f(j-1)-2f(j)| =f(n-1)-f(n)-f(n+k)+f(n+k+1),                 \] 
	and if it is a chain in $S_-f$ 
	\[\sum_{j=n}^{n+k}|f(j+1)+f(j-1)-2f(j)| =-f(n-1)+f(n)+f(n+k)-f(n+k+1).                 \] 
	Therefore   we can bound the $l^1$ norm of the second derivative using only elements of the concave boundary and limits at infinity: 
	\begin{equation}\label{funeq}
		\begin{aligned}
			\|f''\|_1 &\leq \limsup_{n\rightarrow \infty}|f(n+1)-f(n)|  +\limsup_{n\rightarrow \infty} |f(-n-1)-f(-n)| \\  &+ 2\sum_{n\in \partial_l S_-f}f(n)-f(n-1)  + 2\sum_{n\in \partial_r S_-f}f(n)-f(n+1).
		\end{aligned}
	\end{equation}
	This observation sets the stage for our proof of Theorem 1 in the next section.

	\section{The proof of Theorem 1}

	To prove Theorem \ref{theorem char} we  use the  following  lemma,  which observes that   concavity of  $\M \chi_A$ is possible only at points of $A$.
	
	\begin{lemma}\label{Lemma char}
		Let $A$ be a subset of integers.	If  $n\in S_-\M \chi_A$, then $n\in A$. 	
	\end{lemma}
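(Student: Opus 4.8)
The plan is to argue by contraposition: I want to show that if $n \notin A$, then $n \in S_+\M\chi_A$, i.e. that $\M\chi_A$ is convex at every point outside $A$. So fix $n \notin A$, which means $\chi_A(n) = 0$ and hence $\M\chi_A(n) \geq 0$ with the average over the singleton $\{n\}$ contributing nothing. The key structural fact I would exploit is that $\M\chi_A(n)$ is a supremum of averages $\A_{r,s}\chi_A(n)$, and I need to relate the value at $n$ to the values at $n-1$ and $n+1$.

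\medskip

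The heart of the argument is the following observation. Since $\chi_A(n)=0$, any optimizing (or near-optimizing) average window for $\M\chi_A(n)$ can be split or shifted to produce comparable windows centered at the neighbors. Concretely, suppose $\A_{r,s}\chi_A(n)$ is a near-maximal average realizing $\M\chi_A(n)$, using the window $[n-r, n+s]$. Because the central term $|\chi_A(n)| = 0$, I would compare this average against the two windows $[n-r,\, (n+1)+s-1] = [n-r, n+s]$ shifted to be centered at $n+1$ and at $n-1$. The goal is to establish
\[
\M\chi_A(n) \leq \tfrac{1}{2}\bigl(\M\chi_A(n-1) + \M\chi_A(n+1)\bigr),
\]
which is exactly the statement $n \in S_+\M\chi_A$. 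The mechanism is that the same window that is optimal at $n$, when viewed from $n-1$ and from $n+1$, gives averages whose mean dominates the average at $n$ precisely because dropping the zero-valued central point while re-anchoring at a neighbor cannot decrease the relevant averaged mass.

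\medskip

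More carefully, I would take the window $[n-r,n+s]$ achieving the supremum at $n$ (or an approximating sequence if the sup is not attained) and observe that this very same interval is a legitimate averaging window for both $\M\chi_A(n-1)$ (as $\A_{r+1,s-1}$ at $n-1$ when $s\geq 1$, or a similar bookkeeping when $s=0$) and for $\M\chi_A(n+1)$ (as $\A_{r-1,s+1}$, resp.). Thus both $\M\chi_A(n\pm 1)$ are bounded below by the average of $\chi_A$ over $[n-r,n+s]$, which equals $\A_{r,s}\chi_A(n) = \M\chi_A(n)$. This immediately yields $\M\chi_A(n) \leq \M\chi_A(n-1)$ and $\M\chi_A(n) \leq \M\chi_A(n+1)$ individually, and hence the convexity inequality above a fortiori. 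The edge cases $r=0$ or $s=0$ require separate but elementary handling, using that $\chi_A(n)=0$ forces the single-point window to give value $0$, so a genuinely maximizing window must have positive length unless $\M\chi_A(n)=0$, in which case convexity is trivial since the maximal function is nonnegative.

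\medskip

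The main obstacle I anticipate is the careful treatment of suprema that are not attained and the boundary bookkeeping when the optimal window is one-sided ($r=0$ or $s=0$). In those degenerate cases the naive re-anchoring at a neighbor may momentarily leave the window, so I would need to verify that an admissible window at $n-1$ or $n+1$ with at least as large an average still exists; the nonnegativity of $\chi_A$ and the freedom to choose $r,s \in \Z^+$ independently should make this routine, but it is where the argument must be stated with care.
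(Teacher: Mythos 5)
Your reduction to the two-sided case is fine as far as it goes: if a (near-)optimal window $[n-r,n+s]$ at $n$ has $r\geq 1$ and $s\geq 1$, it is admissible at both neighbors, so $\M\chi_A(n)\leq\min\{\M\chi_A(n-1),\M\chi_A(n+1)\}$ and convexity at $n$ follows --- note this part does not even use $n\notin A$. But the case you defer as edge-case bookkeeping ($r=0$ or $s=0$) is where the entire content of the lemma lives, and your proposed resolution of it is false. Take $A=\{1\}$ and $n=0\notin A$: the optimal window at $0$ is $[0,1]$ (so $r=0$, $s=1$), giving $\M\chi_A(0)=1/2$, while $\M\chi_A(-1)=1/3$. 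Hence no admissible window at $n-1$ has average at least $\M\chi_A(n)$, and the individual inequality $\M\chi_A(n)\leq\M\chi_A(n-1)$ that you claim follows ``immediately'' simply fails. Your fallback --- that a genuinely maximizing window must have positive length unless $\M\chi_A(n)=0$ --- does not help: $[0,1]$ has positive length but still excludes $n-1$. (There is also a notational slip: the window $[n-r,n+s]$ seen from $n-1$ is $\A_{r-1,s+1}$ and needs $r\geq 1$, not $s\geq 1$.)

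What actually rescues the one-sided case, and what the paper's proof does, is a quantitative compensation argument that uses $\chi_A(n)=0$ in an essential way. Writing $\M\chi_A(n)=\frac{1}{s+1}\sum_{j=n+1}^{n+s}\chi_A(j)$ for the optimal window $[n,n+s]$, extending the window to reach $n-1$ gives $\M\chi_A(n-1)\geq\frac{s+1}{s+2}\,\M\chi_A(n)$ (a loss), while trimming it to start at $n+1$ gives $\M\chi_A(n+1)\geq\frac{s+1}{s}\,\M\chi_A(n)$ (a gain), and the elementary inequality $\frac{1}{s}+\frac{1}{s+2}\geq\frac{2}{s+1}$ shows the gain outweighs the loss: one obtains the sum inequality $\M\chi_A(n-1)+\M\chi_A(n+1)\geq 2\M\chi_A(n)$ even though one of the two individual inequalities can fail. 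Only the sum is needed for $n\in S_+\M\chi_A$. Your write-up gestures at ``averages whose mean dominates'' in one sentence but then abandons that for the two separate (false) inequalities; without the explicit computation above, the argument has a genuine gap at precisely the case that matters.
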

	
	\begin{proof}
		The condition $n\in S_-\M \chi_A$ implies that $\M \chi_A(n)$ is greater than one of its neighbors $\M \chi_A(n-1),\M \chi_A(n+1).$ Without loss of generality assume $\M \chi_A(n)>\M \chi_A(n-1).$ This in turn implies that $\M \chi_A(n)=\A_{0,s} \chi_A(n)$, for otherwise the strict inequality  would not hold. 
		
		Now assume to the contrary that $n\notin A$. Then 
		\[\M \chi_A(n)=\frac{1}{s+1}\sum_{j=n+1}^{n+s}\chi_A(j)=\frac{s+1}{s^2+2s+1}\sum_{j=n+1}^{n+s}\chi_A(j),\]  
		while
		\[  \M \chi_A(n-1)\geq\frac{1}{s+2}\sum_{j=n+1}^{n+s}\chi_A(j), \ \ \  \M \chi_A(n+1)\geq\frac{1}{s}\sum_{j=n+1}^{n+s}\chi_A(j),  \]
		implying
		\[  \M \chi_A(n-1)+\M \chi_A(n+1)\geq\frac{2s+2}{s^2+2s}\sum_{j=n+1}^{n+s}\chi_A(j).  \]
		Thus $2\M \chi_A(n)\leq \M \chi_A(n-1)+\M \chi_A(n+1), $ a contradiction. Hence $n\in A.$

	\end{proof}

	\begin{proof}[Proof of Theorem 1]
		If $A$ or $A^c$ is empty  the theorem is trivially true. So we may assume otherwise. We let  $p=1$, which emerges as the key case. In this case $\|\chi_A''\|_1\geq 2$.    We can clearly bound each limit term in \eqref{funeq} by 1. To bound the sum terms  we use Lemma \ref{Lemma char}: $n\in S_-\M \chi_A$ implies  $n\in A$, hence $\M \chi_A(n)=1= \chi_A(n)$. Thus  for $n$ in $\partial_l S_-\M \chi_A$
		\begin{equation*}
			\begin{aligned}
				\M \chi_A(n)-\M \chi_A(n-1)\leq  \chi_A(n)-\chi_A(n-1) \leq  2\chi_A(n)-\chi_A(n-1)-\chi_A(n+1), 
			\end{aligned}
		\end{equation*}
		and for $n$ in $\partial_r S_-\M \chi_A$
		\begin{equation*}
			\begin{aligned}
				\M \chi_A(n)-\M \chi_A(n+1)\leq  \chi_A(n)-\chi_A(n+1)  \leq  2\chi_A(n)-\chi_A(n-1)-\chi_A(n+1).
			\end{aligned}
		\end{equation*}
		Therefore we can conclude that 	
		\begin{equation*}
			\begin{aligned}
				\|(\M \chi_A)''\|_1&\leq 2+2\sum_{n\in \partial S_-\M \chi_A } |2\chi_A(n)-\chi_A(n-1)-\chi_A(n+1)|
				\leq 3 \|\chi_A''\|_1.
			\end{aligned}
		\end{equation*}
		
		For  $p> 1$, we first observe that $\|(\M \chi_A)''(n)\|_{\infty}\leq 2$.  As $\|\chi_A''\|_{\infty}\geq 1$  this immediately concludes the $p=\infty$ case. For $1<p<\infty$,
		\begin{equation*}
			\begin{aligned}
				\|(\M \chi_A)''\|_p^p	\leq 2^{p-1}\|(\M \chi_A)''\|_1\leq 3\cdot2^{p-1}  \|\chi_A''\|_1. 
			\end{aligned}
		\end{equation*}
		As  $|\chi_A''(n)|\in\{0,1,2\}$, it is dominated by  $ |\chi_A''(n)|^p.$  So 
		$ \|\chi_A''\|_1 \leq   \|\chi_A''\|_p^p$,  concluding the case.

	\end{proof}

\end{document}